\newtheorem{thm}{Theorem}
\newtheorem{defn}{Definition}
\newcommand{\Z}{\mathbb{Z}}
\newcommand{\D}{\mathbb{D}}
\newcommand{\norm}{\vert\vert}
\begin{document}
\title[OPUC with quasiperiodic Verblunsky Coefficients]{Orthogonal Polynomials on the Unit Circle with quasiperiodic Verblunsky Coefficients have generic purely singular continuous spectrum}

\author[D.\ Ong]{Darren C. Ong}

\address{Department of Mathematics, Rice University, Houston, TX~77005, USA}

\email{darren.ong@rice.edu}

\urladdr{http://math.rice.edu/$\sim$do3}

\thanks{Supported in part by NSF grant
DMS-1067988.}
\maketitle
\begin{abstract}
As an application of the Gordon lemma for orthogonal polynomials on the unit circle, we prove that for a generic set of quasiperiodic Verblunsky coefficients the corresponding two-sided CMV operator has purely singular continuous spectrum. We use a similar argument to that of the Boshernitzan-Damanik result that establishes the corresponding theorem for the discrete Schr\"odinger operator.
\end{abstract}
\begin{paragraph}{\textbf{Keywords}} Spectral Theory, Orthogonal Polynomials on the Unit Circle, Almost Periodicity.
\end{paragraph}
\begin{section}{Introduction}
We concern ourselves primarily with the one-dimensional discrete Schr\"odinger operator with quasiperiodic potential. The discrete Schr\"odinger operator is a $\ell^2(\mathbb Z)$ operator $H=\Delta+V_n$ where $\Delta$ is the discrete Laplacian and $V_n$ is known as the potential sequence. The unitary analogue of the discrete Schr\"odinger operator is the CMV operator, the $\ell^2(\mathbb Z)$ operator $\mathcal E$ given by
\begin{equation*}\label{CMVmatrix}
\left(
\begin{array}{ccccccc}
\ldots&\ldots& \ldots&\ldots&\ldots&\ldots&\ldots\\
\ldots&-\overline{\alpha(0)}\alpha(-1)&\overline{ \alpha(1)}\rho(0)&\rho(1)\rho(0)&0&0&\ldots\\
\ldots&-\rho(0)\alpha(-1)&- \overline{\alpha(1)}\alpha(0)&-\rho(1)\alpha(0)&0&0&\ldots\\
\ldots&0& \overline{\alpha(2)}\rho(1)&-\overline{\alpha(2)}\alpha(1)&\overline{\alpha(3)}\rho(2)&\rho(3)\rho(2)&\ldots\\
\ldots&0& \rho(2)\rho(1)&-\rho(2)\alpha(1)&-\overline{\alpha(3)}\alpha(2)&-\rho(3)\alpha(2)&\ldots\\
\ldots&0& 0&0&\overline{\alpha(4)}\rho(3)&-\overline{\alpha(4)}\alpha(3)&\ldots\\
\ldots&\ldots& \ldots&\ldots&\ldots&\ldots&\ldots\\
\end{array}
\right),
\end{equation*}
where $\{\alpha(n)\}_{n\in\Z}$ is a sequence in $\mathbb D$, the open unit disk, and $\rho(n)=(1-\vert\alpha(n)\vert^2)^{1/2}$. The $\alpha(n)$ are known as Verblunsky coefficients. The one-sided version of this operator is essential in the study of orthogonal polynomials corresponding to probability measures on the unit circle. The Verblunsky coefficients are the recurrence coefficients of those polynomials, and the spectral measure of the one-sided operator is exactly the probabilty measure used to generate the polynomials. This is analogous to the way that the one-sided discrete Schr\"odinger operator is related to the study of orthogonal polynomials corresponding to probability measures on the real line.

It is thus natural to ask if the results pertaining to the discrete Schr\"odinger operator with quasiperiodic potential also hold in the context of CMV operators with quasiperiodic Verblunsky coefficients. In this note, we observe that some facts pertaining to generic spectral decomposition properties of the quasiperiodic Schr\"odinger operator apply to the CMV case as well. Specifically, we assert that the result of \cite{Damanik-Boshernitzan} has a CMV analogue. 

Let us first explain the dynamical setting that underlies the results of this note. We define $\Omega$ as a compact metric space, $T:\Omega\to\Omega$ a homeomorphism and $\mu$ a $T$-ergodic Borel probability measure on $\Omega$. We also define $f$, the sampling function to be a continuous function on $\Omega$. For the Schr\"odinger operator, the range of $f$ is $\mathbb R$ and we let $V_n=f(T^n\omega)$, for $\omega\in \Omega$. Similarly, for our unitary setting $f$ is a function in $C(\Omega,\mathbb D)$ and we let $\alpha(n)=f(T^n\omega)$, for $\omega\in \Omega$. We express the corresponding CMV operator as $\mathcal E_\omega$.

An important example of quasiperiodic dynamics is the minimal shift $T\omega=\omega+\mathfrak a$ on a $d$-dimensional torus, $\Omega=\mathbb T^d$. The paper \cite{Damanik-Boshernitzan} proves generic absence of point spectrum of the Schr\"odinger Operator for quasiperiodic rotations on a $d$-dimensional torus and for skew-shifts on $\mathbb T^2$. In this note, we verify that with some modifications the same arguments apply to the CMV context as well:

\begin{thm}\label{thmcont}
For almost every $\omega\in\Omega$ and $T$ either a minimal shift on $\Omega=\mathbb T^d$ or a skew-shift  on $\mathbb T^2$ given by $T(\omega_1,\omega_2)=(\omega_1+2\mathfrak a,\omega_1+\omega_2)$, there exists a dense $G_\delta$ set of sampling functions in $C(\Omega,\D)$ for which the corresponding CMV operator has empty point spectrum.
\end{thm}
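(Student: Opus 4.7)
The plan is to follow the Boshernitzan--Damanik strategy for the Schr\"odinger operator, replacing their application of Gordon's lemma with the OPUC Gordon lemma alluded to in the abstract. Fix $\omega$ in a suitable full-measure subset of $\Omega$ (coming from a Poincar\'e-type recurrence statement for $T$) and define
$$\mathcal{G}(\omega) := \{f \in C(\Omega,\D) : \mathcal{E}_\omega \text{ has empty point spectrum}\}.$$
The theorem reduces to the assertion that $\mathcal{G}(\omega)$ is a dense $G_\delta$ in $C(\Omega,\D)$, for which the two halves -- soft topological closedness and hard constructive density -- are handled separately.

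For the $G_\delta$ half, I would partition the complement by the location of the eigenvector's mass: for $N \geq 1$, let $F_N(\omega)$ be the set of $f$ for which $\mathcal{E}_\omega$ admits a normalized $\ell^2$ eigenvector $\psi$ with eigenvalue in $\partial\D$ satisfying $\sum_{|n|\leq N}|\psi(n)|^2 \geq 1/2$. A standard compactness argument shows each $F_N(\omega)$ is closed in the uniform norm on $C(\Omega,\D)$: given $f_k \to f$ and witnesses $(\lambda_k,\psi_k)$, pass to a subsequence with $\lambda_k$ convergent in $\partial\D$ and $\psi_k$ weakly convergent in $\ell^2(\Z)$; the concentration on $[-N,N]$ prevents the weak limit $\psi$ from vanishing, and continuity of $f \mapsto \mathcal{E}_\omega$ in operator norm promotes $\psi$ to an eigenvector of $\mathcal{E}_\omega$. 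Hence $\mathcal{G}(\omega)^c = \bigcup_N F_N(\omega)$ is $F_\sigma$.

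The density half is the substance. Given $f_0 \in C(\Omega,\D)$ and $\varepsilon > 0$, the goal is to produce $f$ with $\|f-f_0\|_\infty < \varepsilon$ whose sampled sequence $\alpha(n) = f(T^n\omega)$ satisfies the hypothesis of the OPUC Gordon lemma: a sequence of quasi-periods $q_k \to \infty$ along which the Verblunsky coefficients repeat to super-exponential precision. For the minimal translation on $\mathbb{T}^d$, one exploits orbit recurrence to pick arbitrarily large $q_k$ with $T^{q_k}\omega$ very close to $\omega$, which by the translation structure forces $T^{n+q_k}\omega$ to lie equally close to $T^n\omega$ for every $n$. One then performs a cut-and-paste of $f_0$ in small neighborhoods of the orbit points $T^n\omega$ with $|n|\leq 2q_k$ so that the sampled values repeat exactly across the shift by $q_k$, while remaining within $\varepsilon$ of $f_0$ in the sup norm and preserving both continuity and the range in $\D$. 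The set of $\omega$ for which this construction succeeds with a favorable recurrence rate is of full measure by ergodicity. For the skew-shift on $\mathbb{T}^2$, the orbit combinatorics are more delicate, but the near-periodic windows furnished by the skew-shift's return structure (as exploited in \cite{Damanik-Boshernitzan}) support the same cut-and-paste, adapted to the $\D$-valued range.

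The main obstacle I expect is the quantitative density step: the OPUC Gordon lemma demands that $|\alpha(n+q_k) - \alpha(n)|$ decay faster than any exponential in $q_k$, and achieving this via a uniformly small perturbation of $f_0$ requires simultaneous control over the return rate of $T^{q_k}\omega$ to $\omega$, the modulus of continuity of $f_0$, and the supports of the cut-and-paste modifications. The fact that the range is $\D$ rather than $\R$ introduces a mild additional constraint (modifications must not exit the open unit disk), resolved by an initial slight contraction of $f_0$ toward the origin. Once the $G_\delta$ and density assertions are in hand, Baire's theorem yields the conclusion of Theorem \ref{thmcont}.
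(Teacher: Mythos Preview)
Your overall architecture differs from the paper's. You split the argument into an abstract $G_\delta$ step (showing $\mathcal G(\omega)^c=\bigcup_N F_N(\omega)$ with each $F_N$ closed) and a separate density step. The paper does not argue $G_\delta$-ness of $\mathcal G(\omega)$ at all; instead it exhibits an explicit dense $G_\delta$ subset $\mathcal F=\bigcap_{m}\bigcup_{k\ge m}\mathcal F_k\subset C(\Omega,\D)$, where each $\mathcal F_k$ is an open neighborhood of the set $\mathcal C_k$ of functions that are constant on certain finite unions $\bigcup_{l=0}^4 T^{j+lq_k}(B_k)$ of iterates of a small ball. Membership in infinitely many $\mathcal F_k$ then automatically supplies an entire sequence $q_{k_l}\to\infty$ of good quasi-periods, and for $f\in\mathcal F$ one obtains a residual (respectively full-measure) set $\Omega_f$ of $\omega$'s for which $\alpha(n)=f(T^n\omega)$ is Gordon. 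So the paper reverses your order of quantifiers and gets a single $\mathcal F$ independent of $\omega$.

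There are two concrete gaps in your density step. First, a single cut-and-paste near the orbit segment $\{T^n\omega:|n|\le 2q_k\}$ produces one good quasi-period $q_k$, not a Gordon sequence; the Gordon lemma needs infinitely many $q_k\to\infty$, and you have not said how a single $\varepsilon$-perturbation of $f_0$ achieves that. The paper's $\limsup$ construction is precisely what converts ``one good $q_k$'' (membership in $\mathcal F_k$) into ``infinitely many'' (membership in $\mathcal F$). Second, and more CMV-specific: the OPUC Gordon lemma you invoke (Corollary~1 of \cite{Ong-LP}) requires the quasi-periods $q_k$ to be \emph{even}. This is the one genuinely new wrinkle relative to the Schr\"odinger case, and it is why the paper replaces Boshernitzan--Damanik's (TRP), (MRP), (GRP) by the even variants (eTRP), (eMRP), (eGRP), and then checks separately (Theorems~\ref{minimalshift} and \ref{skewshift}) that minimal torus shifts and skew-shifts with $\mathfrak a$ not badly approximable satisfy these even repetition properties. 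Your recurrence step (``pick arbitrarily large $q_k$ with $T^{q_k}\omega$ close to $\omega$'') does not address parity.

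Your compactness argument for closedness of $F_N(\omega)$ is fine and is a legitimate alternative route to the $G_\delta$ conclusion, but once you adopt the paper's $\limsup$ construction of $\mathcal F$ it becomes unnecessary.
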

Also, we can deduce as an immediate corollary,
\begin{thm}\label{gensing}
For almost every $\omega\in\Omega$ and $T$ either a minimal shift on $\Omega=\mathbb T^d$ or a skew-shift  on $\mathbb T^2$ given by $T(\omega_1,\omega_2)=(\omega_1+2\mathfrak a,\omega_1+\omega_2)$, there exists a dense $G_\delta$ set of sampling functions in $C(\Omega,\D)$ for which the corresponding CMV operator has purely singular continuous spectrum.
\end{thm}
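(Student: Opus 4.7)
The plan is to upgrade Theorem \ref{thmcont}, which supplies a dense $G_\delta$ set $\mathcal{P}\subset C(\Omega,\D)$ of sampling functions for which $\mathcal{E}_\omega$ has empty point spectrum, to the stronger statement that the spectrum is purely singular continuous. Since a unitary operator has purely singular continuous spectrum exactly when both its pure point and its absolutely continuous parts vanish, it suffices to intersect $\mathcal{P}$ with a dense $G_\delta$ set $\mathcal{A}\subset C(\Omega,\D)$ of sampling functions whose CMV operator has empty absolutely continuous spectrum. Because $C(\Omega,\D)$ is a complete metric space under the sup metric, Baire category yields that $\mathcal{P}\cap\mathcal{A}$ is itself a dense $G_\delta$, and on it $\mathcal{E}_\omega$ has purely singular continuous spectrum for $\mu$-a.e.\ $\omega$, as required.

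Producing $\mathcal{A}$ is the substantive step. Here I would invoke (or, if necessary, transcribe) the OPUC analog of the generic absence of absolutely continuous spectrum that Boshernitzan--Damanik combine with their point-spectrum result in the Schr\"odinger setting. The natural framework is the Lyapunov exponent $L(z;f)$ of the Szeg\H{o} cocycle: Kotani theory for OPUC identifies the essential support of $\Sigma_{\mathrm{ac}}$ with the set $\{z\in\partial\D:L(z;f)=0\}$, so $\Sigma_{\mathrm{ac}}=\emptyset$ reduces to a.e.\ positivity of $L$ on $\partial\D$. Upper semicontinuity of $L$ in $f$ (a consequence of subadditivity together with a Herman-type subharmonicity estimate) makes each set of the form $\{f:L(z_k;f)>\varepsilon_n\}$ open in $C(\Omega,\D)$, so running $z_k$ over a countable dense subset of $\partial\D$ and $\varepsilon_n\downarrow 0$ produces a $G_\delta$ intersection sitting inside $\mathcal{A}$.

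The main obstacle I expect is the density portion of that construction: near an arbitrary $f\in C(\Omega,\D)$ one must produce a perturbation $\tilde f$ whose Szeg\H{o} cocycle has positive Lyapunov exponent almost everywhere on $\partial\D$. For the minimal torus shift and for the explicit skew-shift in the statement, this can be arranged by pushing the Verblunsky coefficients uniformly close to $\partial\D$ on a large $\mu$-measure subset of $\Omega$, so that the cocycle is effectively strongly coupled and a Herman subharmonicity bound forces $L>0$; this is the OPUC parallel of the familiar large-coupling almost-Mathieu argument. Once density is established, the Baire intersection above is routine, and all the genuinely new content of Theorem \ref{gensing} is already packaged into Theorem \ref{thmcont}.
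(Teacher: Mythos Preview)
Your high-level strategy---intersect the dense $G_\delta$ set $\mathcal P$ from Theorem~\ref{thmcont} with a dense $G_\delta$ set $\mathcal A$ on which the absolutely continuous spectrum vanishes, then invoke Baire---is exactly what the paper does. The difference is that the paper does not attempt to manufacture $\mathcal A$: it simply cites Zhang's result that for aperiodic Verblunsky coefficients the spectrum is generically purely singular, so the entire proof of Theorem~\ref{gensing} is two lines. You are proposing to rebuild that black box from scratch via Kotani theory and Lyapunov exponents, which is legitimate in spirit but both unnecessary here and, as written, incomplete.

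Two concrete gaps in your construction of $\mathcal A$. First, the openness step is stated backwards: upper semicontinuity of $f\mapsto L(z;f)$ (which is what subadditivity actually gives, via $L=\inf_n n^{-1}\int\log\|A_n\|$) makes the \emph{sublevel} sets $\{f:L(z;f)<c\}$ open, not the superlevel sets $\{f:L(z;f)>\varepsilon_n\}$ that you need. So the $G_\delta$ structure of $\mathcal A$ does not follow from the argument you sketch; one typically has to argue differently, e.g.\ via weak-$*$ semicontinuity of the density of states or via Avila--Damanik type arguments. Second, you correctly flag density as the hard part and then leave it at a heuristic (``push the Verblunsky coefficients close to $\partial\mathbb D$ and invoke a Herman bound''). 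That heuristic is plausible for trigonometric-polynomial sampling functions on the torus, but turning it into a statement valid for arbitrary $f\in C(\Omega,\mathbb D)$ near any given $f_0$ is real work, and for the skew-shift it is not clear that a naive Herman-style subharmonicity argument applies at all. Since the paper's route is to quote an existing generic-singularity theorem, you should do the same rather than reprove it.
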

\begin{proof}
Theorem \ref{thmcont} says that we have absence of point spectrum. According to \cite{Zhang}, we have generic singular spectrum for orthogonal polynomials on the unit circle with aperiodic Verblunsky coefficients. The conclusion follows immediately.
\end{proof}
\end{section}
\begin{section}{Definitions and preliminaries}
We begin with some setup. Any reference to (eTRP), (eMRP), (eGRP) refers to the definitions (TRP), (MRP), (GRP), expressed as Definitions 2,3 and 4 in \cite{Damanik-Boshernitzan} with a small modification. More specifically, we replace the repetition property in those definitions with the following even repetition property

\begin{defn}[Even Repetition Property]\label{RP}
 A sequence $\{\omega_k\}_{k\in[0,j]}$ has the even repetition property if for every $\epsilon>0, s>0$ there exists an \underline{even} integer $q\geq2$ such that  $\mathrm{dist}(\omega_k, \omega_{k+q})<\epsilon$ for $k=0,\ldots, \lfloor sq\rfloor$. 
 \end{defn}
 
 More explicitly, $(\Omega, T)$ satisfies the (eTRP) condition if the set of points in $\Omega$ whose forward orbit satisfies the even repetition property is dense in $\Omega$. $(\Omega, T)$ satisfies the (eMRP) property if that set is full measure instead of dense, and it satisfies (eGRP) if all forward orbits satisfy the even repetition property.
 
Our Verblunsky coefficients will be generated by a homeomorphism $T$ of a compact metric space $\Omega$ and a continuous sampling function $f\in C(\Omega, \mathbb D)$ so that $\alpha(n)=f(T^n\omega)$. 

According to \cite{Ong-LP}, there must exist a positive real-valued function $\Gamma(k,q,r)$ with $k,q\in \mathbb Z_+$ so that if $\{\alpha(n)\}$, $\{\tilde \alpha(n)\}$ are two sequences of Verblunsky coefficients with distance at most $r$ from the origin, corresponding to the matrices $A_n, \tilde A_n$ respectively, (where these matrices are defined in Section 2 of \cite{Ong-LP}) and if 
\[\vert\tilde\alpha(n)-\alpha(n)\vert, \vert\tilde\alpha({n+1})-\alpha({n+1})\vert,\vert\tilde\alpha({n+2})-\alpha({n+2})\vert,\]
 are all less than $\Gamma(k,q,r)$, this implies that $\norm A_n-\tilde A_n\norm< k^{-q}$. We then define a Gordon sequence:
\begin{defn}[Gordon sequence]
A two-sided sequence of Verblunsky coefficients $\alpha(n)$ such that there exists a sequence of even positive integers $q_k\to \infty$ so that 
\[
\max_{-q_k+1\leq n\leq q_k+1}\vert \alpha(n)-\alpha(n\pm q_k)\vert\leq \frac{\Gamma(k, q_k,r_k)}{4}.
\]
is a \em Gordon sequence \em.
Here, $r_k$ is a positive real number less than $1$ for which $\alpha(-2q_k+1),\ldots, \alpha(2q_k+1)$ all lie in a disk of radius $r_k$ centered at the origin.

\end{defn}
According to Corollary 1 of \cite{Ong-LP}, CMV operators corresponding to such sequences have no point spectrum.
\end{section}
\begin{section}{Main theorems and proofs}
\begin{thm}\label{TRPproof}
Suppose $(\Omega, T)$ is minimal and satisfies (eTRP). Then there exists a dense $G_\delta$ subset of $\mathcal F$ of $C(\Omega, \mathbb D)$ such that for every $f\in \mathcal F$, there is a residual subset $\Omega_f\subseteq \Omega$ with the property that for every $\omega\in \Omega_f$, the sequence of two-sided Verblunsky coefficients defined above is a Gordon sequence.
\end{thm}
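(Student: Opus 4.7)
My plan is to carry out a two-layer Baire category argument in the product space $X = C(\Omega, \mathbb D) \times \Omega$, modeled on the approach of \cite{Damanik-Boshernitzan}. For each positive integer $k$, I would let $V_k$ denote the set of pairs $(f, \omega) \in X$ such that there exists an even integer $q \geq k$ with
\[
\max_{-q+1 \leq n \leq q+1} \bigl|f(T^n\omega) - f(T^{n \pm q}\omega)\bigr| < \tfrac14 \Gamma(k, q, r_{f,\omega,q}),
\]
where $r_{f,\omega,q} < 1$ is the smallest radius such that $f(T^j\omega)$ lies in the closed disk of that radius about the origin for all $|j| \leq 2q+1$. Openness of $V_k$ in $X$ is immediate from joint continuity of $(f,\omega) \mapsto f(T^n\omega)$, continuity of $\Gamma$, and the strict inequality.

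The main step is showing $V_k$ is dense. Given $(f_0, \omega_0) \in X$ and $\eta > 0$, I would first invoke (eTRP) to pick any $\omega_1 \in \Omega$ whose forward orbit satisfies ERP (not necessarily near $\omega_0$). Applying ERP at $\omega_1$ with scale $s = 10$ and small $\epsilon > 0$ produces an even integer $q \geq 2$ with $\mathrm{dist}(T^m\omega_1, T^{m+q}\omega_1) < \epsilon$ for $m = 0, \ldots, 10q$. As $\epsilon \to 0$ the witness $q$ is forced to infinity (else the limit would make the orbit of $\omega_1$ periodic, contradicting minimality on an infinite $\Omega$), so $q \geq k$ may be assumed. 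By uniform recurrence, which follows from minimality on a compact space, once $q$ is large enough the segment $\{T^M\omega_1 : M \in [2q-1, 9q-1]\}$ meets the $\eta$-ball about $\omega_0$; I would take such an $M$ and set $\omega = T^M\omega_1$. For every $n \in [-q+1, q+1]$ both $n+M$ and $n+M \pm q$ lie in $[0, 10q]$, so forward ERP at $\omega_1$ together with uniform continuity of $f_0$ yields
\[
\bigl|f_0(T^n\omega) - f_0(T^{n \pm q}\omega)\bigr| \leq \rho_{f_0}(\epsilon),
\]
where $\rho_{f_0}$ is the modulus of continuity of $f_0$. If $\rho_{f_0}(\epsilon) < \tfrac14 \Gamma(k, q, r_{f_0,\omega,q})$ (arranged by successive refinement of $\epsilon$, absorbing the mild circularity that $q$ depends on $\epsilon$), then $(f_0, \omega) \in V_k$ directly. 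Otherwise, a local perturbation of $f_0$ on the finitely many orbit points $T^j\omega$ with $|j| \leq 2q+1$, of sup-norm at most $O(\rho_{f_0}(\epsilon)) < \eta$, produces $f$ close to $f_0$ with $(f, \omega) \in V_k$ (in fact with equality in each paired orbit point).

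With $V_k$ open and dense in $X$ for each $k$, I would put
\[
\mathcal F = \bigcap_{k \geq 1} \bigl\{ f \in C(\Omega, \mathbb D) : \{\omega : (f, \omega) \in V_k\} \text{ is dense in } \Omega \bigr\}.
\]
For each $k$ and each open $U \subseteq \Omega$, the set $\{ f : \exists\, \omega \in U,\ (f,\omega) \in V_k \}$ is the projection of the open set $V_k \cap (C(\Omega, \mathbb D) \times U)$, hence open; intersecting over a countable base of $\Omega$ shows the density condition on $f$ defines a $G_\delta$, so $\mathcal F$ is itself $G_\delta$. Its density follows from the Kuratowski-Ulam theorem applied to each open dense $V_k$ (both factors being Polish). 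For any $f \in \mathcal F$, the set $\Omega_f := \bigcap_k \{\omega : (f,\omega) \in V_k\}$ is a dense $G_\delta$ in $\Omega$, hence residual. Each $\omega \in \Omega_f$ then produces, for every $k$, an even integer $q_k \geq k$ satisfying the Gordon bound at level $k$; since $q_k \to \infty$, the sequence $\alpha(n) = f(T^n\omega)$ is a Gordon sequence.

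The main obstacle is the density of $V_k$: the forward ERP at $\omega_1$ supplies one-sided matching only, whereas the Gordon condition demands a two-sided window. The resolution is the base-point shift $\omega = T^M\omega_1$ with $M \in [2q-1, 9q-1]$, which recenters the window so that forward matching at $\omega_1$ fully covers the two-sided window at $\omega$, together with minimality (through uniform recurrence) to ensure $\omega$ lands near $\omega_0$. A secondary technicality is coordinating the ERP tolerance $\epsilon$ with the Gordon threshold $\Gamma(k, q, r)$, whose argument $q$ is itself determined by $\epsilon$; this is handled by iteration or by the small perturbation of $f_0$ described above.
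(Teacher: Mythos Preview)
Your argument is correct but takes a different packaging than the paper's. The paper does not pass through the product space or invoke Kuratowski--Ulam; instead it works first in $C(\Omega,\mathbb D)$ and then in $\Omega$ by an explicit construction. Fixing a single ERP point $\omega^\ast$, it chooses for each $k$ a small ball $B_k$ about $\omega^\ast$ so that the closures $\overline{T^n(B_k)}$, $1\le n\le 5q_k$, are pairwise disjoint and each five-fold union $\bigcup_{l=0}^4 T^{j+lq_k}(B_k)$ has diameter $O(1/k)$; it then lets $\mathcal C_k$ be the class of sampling functions constant on each such union, takes $\mathcal F_k$ to be an open neighborhood of $\mathcal C_k$, and sets $\mathcal F=\bigcap_m\bigcup_{k\ge m}\mathcal F_k$. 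For $f\in\mathcal F$ the residual set is given concretely as $\Omega_f=\bigcap_m\bigcup_{l\ge m}\bigcup_{j=1}^{q_{k_l}}T^{j+q_{k_l}}(B_{k_l})$. This sidesteps both of the issues you flag: the $\epsilon$--$q$ circularity disappears because functions in $\mathcal C_k$ make the Gordon differences vanish identically, and the local perturbation step is replaced by the observation that $\mathcal C_k$ is dense (immediate from uniform continuity and the shrinking diameters, using that the iterated balls are disjoint). Your one-sided-to-two-sided recentering via $\omega=T^M\omega_1$ and uniform recurrence corresponds in the paper to taking $\omega\in T^{j+q_k}(B_k)$ rather than $\omega\in B_k$; the uses of minimality are the same in spirit. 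Both proofs implicitly assume enough regularity of $\Gamma$ in its radius argument to make the relevant sets open. The paper's version is more elementary and self-contained; yours is more structural and would transfer verbatim to other Gordon-type criteria once the single set $V_k$ is identified.
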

\begin{proof}
The proof is similar to the proof found in \cite{Damanik-Boshernitzan}, with some small modifications.

By assumption, there is a point $\omega\in \Omega$ whose forward orbit has the even repetition property.  For each $k\in \mathbb Z_+$, we consider $\epsilon=1/k$, $s=4$ and the associated $q_k=q(\epsilon,s)$ in Definition \ref{RP}. We then have that $q_k\to\infty$ as $k\to\infty$. Now take an open ball $B_k$ around $\omega$ with radius small enough so that 
 \[\overline{T^n(B_k)}, 1\leq n\leq 5q_k\] are disjoint and, for every $1\leq j\leq q_k$, 
 \[\bigcup _{l=0}^4 T^{j+lq_k} (B_k)\]
 is contained in some ball of radius $5\epsilon$. Define 
 \[\mathcal C_k=\{ f\in C(\Omega, \mathbb D):f \text{ is constant on each set } \bigcup_{l=0}^4 T^{j+lq_k} (B_k), 1\leq j\leq q_k\},\]
 also define
 \[\mathcal F_k=\left\{f\in C(\Omega, \mathbb D): \exists \tilde f\in \mathcal C_k, \text{ such that } \norm f-\tilde f\norm <\frac{1}{2}\left(\frac{\Gamma(k,q_k,\norm f\norm)}{4}\right) \right\}. \]
 Note that $\mathcal F_k$ is an open neighborhood of $\mathcal C_k$, and hence for each $m$
 \[\bigcup_{k\geq m} \mathcal F_k\]
is an open and dense subset of $C(\Omega, \mathbb D)$. This follows since every $f\in C(\Omega,\mathbb D)$ is uniformly continuous and the diameter of the set $\bigcup_{l=0}^4 T^{j+lq_k} (B_k)$ goes to zero, uniformly in $j$ as $k\to\infty$. Thus 
\[\mathcal F=\bigcap_{m\geq 1}\bigcup _{k\geq m} \mathcal F_k\]
is a dense $G_\delta$ subset of $C(\Omega, \mathbb D)$.

Consider some $f\in \mathcal F$. Then $f\in \mathcal F_{k_l}$ for some sequence $k_l\to\infty$. Observe that for every $m\geq 1$,
\[
\bigcup _{l\geq m} \bigcup_{j=1}^{q_{k_l}} T^{j+q_{k_l}}(B_{k_l})
\]
is an open and dense subset of $\Omega$ since $T$ is minimal and $q_{k_l}\to \infty$. Thus 
\[\Omega_f=\bigcap_{m\geq 1}\bigcup_{l\geq m} \bigcup_{j=1}^{q_{k_l}}T^{j+q_{k_l}}(B_{k_l})\]
is a dense $G_\delta$ subset of $\Omega$.

Given $\omega\in\Omega_f$, $\omega $ belongs to $\bigcup _{j=1}^{q_{k_l}} T^{j+q_{k_l}}(B_{k_l})$ for infinitely many $l$. For each such $l$, we have by construction that 
\[\max_{1\leq j\leq q_{k_l}}\vert f(T^{j+q_{k_l}} \omega)-f(T^{j+2q_{k_l}}\omega)\vert <\frac{\Gamma(k, q_k, \norm f\norm )}{4},\]
\[\max_{1\leq j\leq q_{k_l}}\vert f(T^j \omega)-f(T^{j+q_{k_l}}\omega)\vert <\frac{\Gamma(k, q_k, \norm f\norm )}{4},\]
\[\max_{1\leq j\leq q_{k_l}}\vert f(T^{j-q_{k_l}} \omega)-f(T^{j}\omega)\vert <\frac{\Gamma(k, q_k, \norm f\norm )}{4}\]
and 
\[\max_{1\leq j\leq q_{k_l}}\vert f(T^{j-2q_{k_l}} \omega)-f(T^{j-q_{k_l}}\omega)\vert <\frac{\Gamma(k, q_k, \norm f\norm )}{4}.\]
Thus $\alpha(n)=f(T^n\omega)$ is Gordon.
\end{proof}
\begin{thm}\label{MRPproof}
Suppose that $(\Omega, T,\mu)$ satisfies (eMRP). Then there exists a residual subset $\mathcal F$ of $C(\Omega,\mathbb D)$ such that for every $f\in \mathcal F$, there exists a subset $\Omega_f\subseteq \Omega$ of full $\mu$ measure with the property that for every $\omega\in\Omega_f$,  the corresponding sequence of two-sided Verblunsky coefficients is a Gordon sequence.
\end{thm}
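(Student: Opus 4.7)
The plan is to adapt the proof of Theorem~\ref{TRPproof}, replacing the topological arguments based on minimality of $T$ with measure-theoretic ones drawing on ergodicity of $\mu$ and the full-measure hypothesis (eMRP). For each $k \in \Z_+$ I will construct an open dense set $\mathcal{F}_k \subseteq C(\Omega, \D)$ such that every $f \in \mathcal{F}_k$ determines a measurable subset $\Omega_{f,k} \subseteq \Omega$ with $\mu(\Omega_{f,k}) \geq 1 - 2^{-k}$, on which the Verblunsky sequence $\alpha(n) = f(T^n\omega)$ satisfies the Gordon estimate at scale $1/k$ with some even $q \geq k$. The candidate residual set is $\mathcal{F} = \bigcap_{m \geq 1} \bigcup_{k \geq m} \mathcal{F}_k$, a dense $G_\delta$ in $C(\Omega, \D)$. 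For each $f \in \mathcal{F}$, with $f \in \mathcal{F}_{k_l}$ along a subsequence $k_l \to \infty$, I would set $\Omega_f = \limsup_{l} \Omega_{f, k_l}$, which by the Borel--Cantelli lemma has full $\mu$-measure (since $\sum_l \mu(\Omega_{f,k_l}^c) \leq \sum_l 2^{-k_l} < \infty$). Each $\omega \in \Omega_f$ then lies in $\Omega_{f, k_l}$ for infinitely many $l$, and the sequence of witnessing periods $q_{k_l}(\omega) \geq k_l$ tends to $\infty$, establishing the Gordon condition.

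To construct $\mathcal{F}_k$, fix $k$ and for each even $q \geq 2$ let
\[
A_{k,q} = \{\omega \in \Omega : \mathrm{dist}(T^j\omega, T^{j+q}\omega) < 1/(2k),\ 0 \leq j \leq 4q\},
\]
an open set. Since $\mu$-a.e. orbit of $T$ is aperiodic, for $k$ large the union $\bigcup_{q \geq k, \text{ even}} A_{k,q}$ has $\mu$-measure close to $1$ by (eMRP), and continuity of measure from below supplies a finite set $Q_k$ of even integers with $\min Q_k \geq k$ and $\mu(\bigcup_{q \in Q_k} A_{k,q}) \geq 1 - 2^{-k-1}$. For each $q \in Q_k$ I would cover $A_{k,q}$ by finitely many open balls $B_{k,q,i}$ chosen small enough that, across all pairs $(q,i)$, the iterates $\overline{T^n(B_{k,q,i})}$ for $1 \leq n \leq 5q$ are pairwise disjoint and each group $\bigcup_{l'=0}^{4} T^{j+l'q}(B_{k,q,i})$ has diameter less than $5/k$. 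Define $\mathcal{C}_k$ as the set of $f \in C(\Omega, \D)$ constant on each such group, and
\[
\mathcal{F}_k = \left\{f \in C(\Omega, \D) : \exists\, \tilde f \in \mathcal{C}_k \text{ with } \norm f - \tilde f\norm < \tfrac{\Gamma(k, \max Q_k, \norm f\norm)}{8}\right\}.
\]
Openness of $\mathcal{F}_k$ is immediate, and density follows exactly as in the proof of Theorem~\ref{TRPproof} by modifying $f$ on the small-diameter groups.

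With $\Omega_{f, k_l} = \bigcup_{q \in Q_{k_l}} \bigcup_i \bigcup_{j=1}^q T^{j+q}(B_{k_l, q, i})$, the index-chasing argument of Theorem~\ref{TRPproof} yields both Gordon estimates at period $q$ for every $\omega \in \Omega_{f, k_l}$. The main obstacle is the measure lower bound $\mu(\Omega_{f, k_l}) \geq 1 - 2^{-k_l}$. By $T$-invariance,
\[
\mu\left(\bigcup_{j=1}^{q} T^{j+q}(A_{k,q})\right) = \mu\left(\bigcup_{m=0}^{q-1} T^m(A_{k,q})\right),
\]
which tends to $1$ as $q$ grows by ergodicity. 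The technical heart of the argument is to select $Q_k$ and the ball cover so that the aggregate set $\bigcup_{q \in Q_k} \bigcup_{j=1}^q T^{j+q}(A_{k,q})$ genuinely attains measure at least $1 - 2^{-k}$, while preserving the smallness of the balls needed for orbit disjointness. The tension between the smallness required for the $5q$ iterates to be disjoint and the largeness required for the window $[q+1, 2q]$ to ergodically cover $\Omega$---effective only once $q$ is comparable to $1/\mu(A_{k,q})$---is the principal difficulty, and its resolution likely calls for a Rokhlin-tower-style estimate. Once this measure bound is in place, the remainder is a direct translation of the TRP construction.
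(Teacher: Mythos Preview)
Your sketch has a concrete obstruction beyond the tension you already flag. If the disjointness of the iterates $T^n(B_{k,q,i})$, $1\le n\le 5q$, is meant to hold \emph{across all pairs} $(q,i)$ (as your wording suggests), then a measure count gives $\sum_{q\in Q_k}\sum_i 5q\,\mu(B_{k,q,i})\le 1$, while the covering of $A_{k,q}$ forces $\sum_i \mu(B_{k,q,i})\ge \mu(A_{k,q})$; combined with $\min Q_k\ge k$ this yields $\mu\bigl(\bigcup_{q\in Q_k}A_{k,q}\bigr)\le \sum_{q}\mu(A_{k,q})\le 1/(5k)$, contradicting your choice $\mu\bigl(\bigcup_{q\in Q_k}A_{k,q}\bigr)\ge 1-2^{-k-1}$. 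If instead disjointness is only imposed for each fixed $(q,i)$, the groups $\bigcup_{l'=0}^4 T^{j+l'q}(B_{k,q,i})$ for distinct $(q,i)$ will generically intersect, and the constancy conditions defining $\mathcal C_k$ then chain together and may force $\tilde f$ to be constant on sets of large diameter; the density assertion ``follows exactly as in the TRP proof'' is no longer justified, since there a single ball with $5q_k$ disjoint iterates guarantees the groups are pairwise disjoint. Your Rokhlin-tower suggestion does not rescue the measure bound either: a tower of height $5q$ places only a fraction $1/5$ of the space in the window $\bigcup_{j=1}^q T^{j+q}(B)$, so $\mu(\Omega_{f,k})\ge 1-2^{-k}$ is out of reach by this route.

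The paper itself gives no independent argument: it states that the proof of Theorem~3 of \cite{Damanik-Boshernitzan} carries over verbatim once the open neighbourhoods are redefined with the CMV threshold $\tfrac12\cdot\Gamma(k,q_k,\|f\|)/4$. That proof is organised differently---the neighbourhoods $\mathcal F_i$ are built from an auxiliary countable family $F_i$ rather than from a single collection of balls per scale---and does not attempt to cover a large-measure set by balls with globally disjoint iterates, so the obstruction above does not arise. Your Borel--Cantelli endgame is the right idea, but the construction of the approximating classes and of the sets $\Omega_{f,k}$ has to follow the Boshernitzan--Damanik framework rather than a multi-ball amplification of the TRP argument.
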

The proof of this theorem is almost exactly the same as that of Theorem 3 of \cite{Damanik-Boshernitzan}, except that we define

 \[\mathcal F_i=\left\{f\in C(\Omega, \mathbb D): \exists \tilde f\in F_i, \text{ such that } \norm f-\tilde f\norm <\frac{1}{2}\left(\frac{\Gamma(k,q_k,\norm f\norm)}{4}\right) \right\}, \]
 
 and make some other minor adjustments.

\begin{thm}\label{minimalshift}
Every minimal shift $T\omega=\omega+\mathfrak a$ on the torus $\mathbb T^d$ satisfies (eGRP).
\end{thm}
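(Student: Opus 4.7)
The plan is to exploit the fact that for a translation on the torus, the distance $\operatorname{dist}(\omega_k,\omega_{k+q})$ does not depend on $\omega$ or on $k$, so the (eGRP) condition collapses to a statement purely about the orbit of $0$ under $\mathfrak{a}$.

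First I would observe that if $T\omega=\omega+\mathfrak{a}$, then $\omega_k:=T^k\omega=\omega+k\mathfrak{a}$, so that
\[
\operatorname{dist}(\omega_k,\omega_{k+q})=\operatorname{dist}(0,q\mathfrak{a})=\Vert q\mathfrak{a}\Vert_{\mathbb{T}^d},
\]
independently of $\omega$ and $k$. Consequently the $s$ parameter in Definition \ref{RP} plays no role for translations on the torus, and the (eGRP) property is equivalent to the purely number-theoretic statement: for every $\epsilon>0$ there exists an even integer $q\geq 2$ with $\Vert q\mathfrak{a}\Vert_{\mathbb{T}^d}<\epsilon$.

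Next I would use minimality of $T$: the forward orbit $\{n\mathfrak{a}:n\in\mathbb{Z}_+\}$ is dense in $\mathbb{T}^d$, so given any $\epsilon>0$ I can pick $n\geq 1$ with $\Vert n\mathfrak{a}\Vert_{\mathbb{T}^d}<\epsilon/2$. Setting $q:=2n$ gives an even integer $q\geq 2$ satisfying
\[
\Vert q\mathfrak{a}\Vert_{\mathbb{T}^d}=\Vert 2n\mathfrak{a}\Vert_{\mathbb{T}^d}\leq 2\Vert n\mathfrak{a}\Vert_{\mathbb{T}^d}<\epsilon,
\]
by the triangle inequality on the torus. Since this estimate is uniform in the starting point $\omega$ and in the index $k$, every forward orbit satisfies the even repetition property, which is exactly (eGRP).

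There is essentially no obstacle here; the only mild subtlety is the trick of doubling an approximation to bypass the parity constraint, which costs only a factor of $2$ in the quality of approximation and is absorbed by redefining $\epsilon$.
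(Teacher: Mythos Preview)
Your proof is correct and rests on the same idea as the paper: for torus translations the distance $\operatorname{dist}(\omega_k,\omega_{k+q})=\Vert q\mathfrak{a}\Vert$ is independent of $\omega$ and $k$, and the parity requirement is handled by a doubling trick. The only difference is packaging---the paper phrases the doubling as the observation that (eGRP) for $\omega\mapsto\omega+\mathfrak{a}$ is equivalent to (GRP) for $\omega\mapsto\omega+2\mathfrak{a}$ and then cites the (GRP) result from Boshernitzan--Damanik, whereas you carry out the density/doubling argument directly; your version is more self-contained but equivalent in content.
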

\begin{proof}
Note that the assumption that the condition that $(\mathbb T^d,T)$ satisfies (eGRP) is the same thing as saying that $(\mathbb T^d, T')$ defined by $T'\omega=\omega+2\mathfrak a$ satisfies (GRP). But this is true by Theorem 3 of \cite{Damanik-Boshernitzan}.
\end{proof}

\begin{defn}
$\mathfrak a\in \mathbb T$ is called \em badly approximable \em if there exists a constant $c>0$ such that $\left<\mathfrak a q\right>>c/q$ for every $q\in \mathbb Z\setminus \{0\}$, where $\left<\ldots\right>$ refers to distance from $\mathbb Z$.
\end{defn}

\begin{thm}\label{skewshift}
For a minimal skew-shift $T(\omega_1,\omega_2)=(\omega_1+2\mathfrak a,\omega_1+\omega_2)$ on the torus $\mathbb T^2$, the following are equivalent.
\begin{enumerate}[(i)]
\item $\mathfrak a $ is not badly approximable
\item $(\Omega, T)$ satisfies (eGRP)
\item $(\Omega, T,\mathrm{Leb})$ satisfies (eMRP)
\item $(\Omega, T)$ satisfies (eTRP)
\end{enumerate} 
\end{thm}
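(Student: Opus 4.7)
The plan is to reduce each of (ii), (iii), and (iv) to its ordinary (non-even) counterpart for the same map $T$, invoke the skew-shift case of \cite{Damanik-Boshernitzan} to identify those ordinary conditions with the failure of bad approximability for $2\mathfrak{a}$, and then convert that into (i) by a short arithmetic observation. The chain of implications (ii) $\Rightarrow$ (iii) $\Rightarrow$ (iv) will be immediate from the set inclusions \emph{all orbits} $\subseteq$ \emph{full measure} $\subseteq$ \emph{dense} (the second relying on the full support of Lebesgue measure on $\mathbb T^2$), so only (iv) $\Rightarrow$ (i) $\Rightarrow$ (ii) require genuine work.

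The key technical step is to verify that on any dynamical system, a single orbit satisfies the ordinary repetition property of \cite{Damanik-Boshernitzan} if and only if it satisfies our even repetition property. One direction is immediate. For the other, given $(\epsilon, s)$, I would apply the ordinary repetition property with parameters $(\epsilon/2, 2s+1)$ to obtain some $q_0 \geq 2$. If $q_0$ is even, take $q = q_0$; otherwise take $q = 2q_0$ (which is even) and estimate
\[
\mathrm{dist}(T^k\omega, T^{k+q}\omega) \leq \mathrm{dist}(T^k\omega, T^{k+q_0}\omega) + \mathrm{dist}(T^{k+q_0}\omega, T^{k+2q_0}\omega) < \frac{\epsilon}{2} + \frac{\epsilon}{2} = \epsilon
\]
for $k = 0, \ldots, \lfloor sq \rfloor = \lfloor 2sq_0 \rfloor$. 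Both terms on the right are controlled since $k$ and $k + q_0$ both lie in $[0, \lfloor (2s+1)q_0 \rfloor]$, using the identity $\lfloor 2sq_0 \rfloor + q_0 = \lfloor (2s+1)q_0 \rfloor$. Because this argument works orbit-by-orbit, it yields (eTRP) $\Leftrightarrow$ (TRP), (eMRP) $\Leftrightarrow$ (MRP), and (eGRP) $\Leftrightarrow$ (GRP) as properties of $(\Omega, T)$.

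The map $T$ is precisely the standard skew-shift with first-coordinate shift $2\mathfrak{a}$, so the skew-shift case of \cite{Damanik-Boshernitzan} identifies (ii), (iii), and (iv) all with the assertion that $2\mathfrak{a}$ is not badly approximable. To pass from this assertion to (i), I would use the elementary equivalence that $\mathfrak{a}$ is not badly approximable if and only if $2\mathfrak{a}$ is: one direction follows from $\langle q \cdot 2\mathfrak{a} \rangle = \langle (2q) \mathfrak{a} \rangle$, and the other by splitting on the parity of $q$ and noting that if $\langle q\mathfrak{a} \rangle \leq 1/4$ then $\langle 2q\mathfrak{a} \rangle = 2\langle q\mathfrak{a} \rangle$, after separately checking finitely many small values of $q$.

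The main place requiring care is the orbit-by-orbit bookkeeping in the key lemma, and in particular verifying that the parameter choice $(\epsilon/2, 2s+1)$ precisely accommodates the required range $k \leq \lfloor 2sq_0 \rfloor$; once that is in place, the rest of the proof is essentially assembly of existing results.
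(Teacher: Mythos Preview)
Your proof is correct but takes a different route from the paper. The paper argues the cycle directly: it uses (eTRP)\,$\Rightarrow$\,(TRP)\,$\Rightarrow$\,(i) by quoting \cite{Damanik-Boshernitzan}, and then proves (i)\,$\Rightarrow$\,(ii) by hand---starting from a sequence $q_k$ with $q_k\langle q_k\mathfrak a\rangle\to 0$, doubling to force the $q_k$ even, computing $T^{n+q}-T^n$ explicitly, and choosing multipliers $m_k\in\{1,\dots,\lfloor\epsilon^{-1}\rfloor+1\}$ so that $\tilde q_k=m_kq_k$ witnesses the even repetition property for every orbit. Your approach instead isolates a general orbit-level lemma (ordinary RP $\Leftrightarrow$ even RP, via the $(\epsilon/2,\,2s+1)$ trick) valid on any system, then invokes the full skew-shift equivalence of \cite{Damanik-Boshernitzan} as a black box for the shift parameter $2\mathfrak a$, and finally passes between $\mathfrak a$ and $2\mathfrak a$ arithmetically. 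What this buys you is modularity: your lemma is system-independent and would equally dispatch Theorem~\ref{minimalshift} without the $T'\omega=\omega+2\mathfrak a$ trick used there. What the paper's approach buys is that it avoids relying on the harder direction of the Damanik--Boshernitzan skew-shift equivalence, instead reproving that direction in the even setting; it also sidesteps the need to argue separately that bad approximability of $\mathfrak a$ and of $2\mathfrak a$ coincide.
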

\begin{proof}
Note that since (eTRP) is a stronger condition than (TRP), and (eGRP)$\implies$ (eMRP)$\implies$(eTRP), and since Theorem 4 in \cite{Damanik-Boshernitzan} asserts that (TRP)$\implies$ $(i)$, we only need to prove $(i)\to (ii)$.

Assume that $\mathfrak a$ is not badly approximable. This means that there is some positive integer sequence $q_k\to\infty$ such that 
\[\lim_{k\to\infty} q_k\left<\mathfrak a q_k\right>=0.\]
Furthermore, if we double all the $q_k$s, clearly this limit still equals zero. Thus we may assume all the $q_k$s are even.

Iterating the skew shift $n$ times, we find 
\[ T^n(\omega_1,\omega_2)=(\omega_1+2n\mathfrak a, \omega_2+n\omega_1+n(n-1)\mathfrak a),\]

Therefore, 
\begin{equation}\label{iterate}
T^{n+q}(\omega_1,\omega_2)-T^n(\omega_1,\omega_2)=(2q\mathfrak a, q\omega_1+q^2\mathfrak a+2nq\mathfrak a-q\mathfrak a).
\end{equation}

Let $(\omega_1,\omega_2)\in\mathbb T^2$, $\epsilon>0$ and $r>0$ be given. We will construct an even sequence $\tilde q_k\to\infty$ so that for $1\leq n\leq r\tilde q_k$,
\[(2\tilde q_k\mathfrak a, \tilde q_k\omega_1+\tilde q_k^2\mathfrak a+2n \tilde q_k\mathfrak a-\tilde q_k\mathfrak a)\]
is of size $O(\epsilon)$. Each $\tilde q_k$ will be of the form $m_k q_k$ for some $m_k\in \{1,2,\ldots, \lfloor \epsilon^{-1}\rfloor+1\}$. We can see that every term of Equation \ref{iterate} except $\tilde q_k\omega_1$ goes to zero as $k\to\infty$, regardless of the choice of $m_k$, and hence is less than $\epsilon$ for $k$ large enough. To treat the remaining term, we can just choose $m_k$ in the specified $\epsilon$-dependent range so that $\tilde q_k\omega_1=m_k(q_k\omega_1)$ is of size less than $\epsilon$ as well. Consequently, the orbit of $(\omega_1,\omega_2)$ has the repetition property. Since $(\omega_1,\omega_2)$ was arbitrary, (eGRP) holds.
\end{proof}
\begin{proof}[Proof of Theorem \ref{thmcont}]
By Theorem \ref{minimalshift}, we know that minimal shifts satisfty (eGRP) and hence (eMRP) and (eTRP). By Theorem \ref{skewshift} we know that skew shifts with  $\mathfrak a$ not badly approximable satisfy (eMRP). Note that the set of badly approximable $\mathfrak a$'s has zero Lebesgue measure (\cite{Khintchin}, Theorem 29 on p.60). We then apply Theorems \ref{TRPproof} and \ref{MRPproof} to conclude our proof.
\end{proof}
\end{section}

\bibliographystyle{alpha}   
\bibliography{mybib}
\end{document}